\documentclass[oneside, 11pt, english]{amsart}
\usepackage[T1]{fontenc}
\usepackage[latin9]{inputenc}
\usepackage{amsthm}
\usepackage{amstext}
\usepackage{amssymb}
\usepackage{esint}
\usepackage{graphicx}
\usepackage{enumerate}

\textwidth=13cm

\makeatletter
\newtheorem{theorem}{Theorem}[section]
\newtheorem{lemma}[theorem]{Lemma}
\newtheorem{proposition}[theorem]{Proposition}

\numberwithin{figure}{section}
\theoremstyle{definition}

\theoremstyle{remark}
\newtheorem{remark}[theorem]{Remark}

\numberwithin{equation}{section}
\makeatother

\usepackage{babel}

	\DeclareMathOperator{\loc}{loc}

	\DeclareMathOperator{\spn}{span}
	\DeclareMathOperator{\Imag}{Im}

\begin{document}

\title[Spectral Stability]{Spectral Stability Estimates of Neumann Divergence Form Elliptic Operators}

\author{VLADIMIR GOL'DSHTEIN, VALERII PCHELINTSEV and ALEXANDER UKHLOV$^1$}

\maketitle
\noindent
{\scriptsize We study spectral stability estimates of elliptic operators in divergence form $-\textrm{div} [A(w) \nabla g(w)]$ with the Neumann boundary condition in non-Lipschitz domains $\Omega \subset \mathbb C$. The suggested method is based on connections of planar quasiconformal mappings with Sobolev spaces and its applications to the Poincar\'e inequalities.}

\vskip 0.2cm
\noindent
{\scriptsize \textit{AMS 2010 Subject Classification:} 35P15, 46E35, 30C60.}

\vskip 0.2cm
\noindent
{\scriptsize \textit{Key words:} elliptic equations, Sobolev spaces, quasiconformal mappings.}

\section{\textbf{INTRODUCTION}}

In this paper we give applications of the quasiconformal mapping theory to the spectral stability estimates of the Neumann eigenvalues of $A$-divergent form elliptic operators:
\begin{equation}\label{EllDivOper}
L_{A}=-\textrm{div} [A(w) \nabla g(w)], \quad w=(u,v)\in \Omega, \quad \left\langle A(w) \nabla g, n \right\rangle|_{\partial \Omega}=0,
\end{equation}
with $A \in M^{2 \times 2}(\Omega)$. We denote, by $M^{2 \times 2}(\Omega)$, the class of all $2 \times 2$ symmetric matrix functions $A(w)=\left\{a_{kl}(w)\right\}$, $\textrm{det} A=1$, with measurable entries satisfying the uniform ellipticity condition
\begin{equation}\label{UEC}
\frac{1}{K}|\xi|^2 \leq \left\langle A(w) \xi, \xi \right\rangle \leq K |\xi|^2 \,\,\, \text{a.e. in}\,\,\, \Omega,
\end{equation}
for every $\xi \in \mathbb C$, where $1\leq K< \infty$.
Such type elliptic operators in divergence form arise in various problems of mathematical physics (see, for example, \cite{AIM}).

The suggested approach is based on the quasiconformal change of variables that allows to reduce (\ref{EllDivOper}) to the weak weighted eigenvalue problem in $\mathbb D$:
\begin{equation}\label{WFWEP}
\iint\limits_\mathbb D \left\langle \nabla f(z), \nabla \overline{g(z)} \right\rangle dxdy
= \mu \iint\limits_\mathbb D h(z)f(z)\overline{g(z)}~dxdy, \,\,\, \forall g\in W^{1,2}(\mathbb D). 
\end{equation}

On this base we obtain the spectral stability estimates in a wide class of $A$-quasiconformal $\beta$-regular domains $\Omega\subset\mathbb C$, or in another terminology, domain satisfy quasihyperbolic boundary condition \cite{KOT}.

In \cite{GPU2019} (see also \cite{KOT,M}) it was proved that in $A$-quasiconformal regular domains $\Omega \subset \mathbb C$ the embedding operator 
\[
i_{\Omega}:W^{1,2}(\Omega,A) \hookrightarrow L^2(\Omega)
\]
is compact. 

Hence we conclude that in $A$-quasiconformal regular domains $\Omega \subset \mathbb C$ the Neumann spectrum of the divergence form elliptic operators $-\textrm{div} [A(w) \nabla g(w)]$ is discrete and can be written in the form of a non-decreasing sequence
\[
0=\mu_1[A,\Omega] < \mu_2[A,\Omega] \leq \ldots \leq \mu_n[A,\Omega] \leq \ldots,
\] 
where each eigenvalue is repeated as many time as its multiplicity (see, for example, \cite{M}).

Recall that a simply connected domain ${\Omega} \subset \mathbb C$ is called an $A$-quasiconformal $\beta$-regular domain \cite{GPU19} if
$$
\iint\limits_{{\Omega}} |J(w, \varphi)|^{1-\beta}~dudv < \infty, \,\,\,\beta>1,
$$
where $J(w, \varphi)$ is a Jacobian of an $A$-quasiconformal mapping $\varphi: {\Omega}\to\mathbb D$.

It is said that two $A$-quasiconformal $\beta$-regular domains $\Omega_1,\,\Omega_2$ represent an \textit{$A$-conformal $\beta$-regular pair} if there exists a conformal mappings $\psi:\Omega_1 \to \Omega_2$ such that
\[
\iint\limits_{\Omega_1}|J(z,\psi)|^{\beta}dxdy<\infty \,\, \& \,\,
\iint\limits_{\Omega_2}|J(w,\psi^{-1})|^{\beta}dudv<\infty.
\] 

Note, that two $A$-quasiconformal $\beta$-regular domains $\Omega_1=\varphi_1^{-1}(\mathbb D)$ and $\Omega_2=\varphi_2^{-1}(\mathbb D)$ represent an $A$-conformal $\beta$-regular pair if and only if 
\[
\Phi_{\beta}(\varphi_1,\varphi_2)=\left(\iint \limits_{\mathbb D}\max \left\{\frac{|J(z,\varphi_1^{-1}|^{\beta}}{|J(z,\varphi_2^{-1}|^{\beta -1}}, \frac{|J(z,\varphi_2^{-1}|^{\beta}}{|J(z,\varphi_1^{-1}|^{\beta -1}}\right\}
\right)^{\frac{1}{2\beta}}<\infty.
\] 
As an example, two Ahlfors type domains also represent an $A$-conformal regular pair.

The spectral stability estimates of the self-adjoint elliptic operators were intensively studied during the last decade. See, for example, \cite{BBL, BL1, BL2, BL3, BLLdeC, LP, LMS} where the history of the problem and main results in this area can be found.

In  the previous works \cite{BGU15, BGU16}, using an approach which is based on the conformal theory of composition operators on Sobolev spaces were established the conformal spectral stability estimates of Dirichlet eigenvalues and Neumann eigenvalues of Laplacian in non-Lipschitz domains and even in fractal type domains. In \cite{GPU2019a} were obtained the spectral stability estimates of Dirichlet eigenvalues of elliptic operators in divergence form.   

In the present paper we prove that if $\Omega_1=\varphi_1^{-1}(\mathbb D)$ and $\Omega_2=\varphi_2^{-1}(\mathbb D)$
represent an $A$-conformal $\beta$-regular pair, then for any $n\in \mathbb N$:
\[
|\mu_n[A, \Omega_1]-\mu_n[A, \Omega_2]| 
\leq c_n B^2_{\frac{4\beta}{\beta -1},2}(\mathbb D,h) \Phi_{\beta}(\varphi_1,\varphi_2) \cdot
\|J_{\varphi_1^{-1}}^{\frac{1}{2}}-J_{\varphi_2^{-1}}^{\frac{1}{2}}\,|\,L^{2}(\mathbb D)\|,
\]
where
$c_n=\max\left\{\mu_n^2[A, \Omega_1], \mu_n^2[A, \Omega_2]\right\}$,  $J_{\varphi_k^{-1}}$ are Jacobians of $A^{-1}$-quasiconformal mappings $\varphi_k^{-1}:\mathbb D\to\Omega_k$, $k=1,2$.
\begin{remark}
The constant $B_{\frac{4\beta}{\beta -1},2}(\mathbb D,h)$ is the best constant for corresponding weighted Poincar\'e-Sobolev inequalities in the unit disc $\mathbb D$.
\end{remark}

The method proposed to investigation the weak weight eigenvalue problem for the Neumann Laplacian is based on the Sobolev embedding theorems \cite{GG94,GU09} in connection with the composition operators on Sobolev spaces \cite{U93, VU02}.

\section{\textbf{SOBOLEV SPACES AND $A$-QUASICONFORMAL MAPPINGS}}

Let $\Omega \subset \mathbb C$ be a domain and $h:\Omega \to \mathbb R$ be a positive a.e. locally integrable function i.e. a weight. We consider the two-weighted Sobolev space $W^{1,p}(\Omega,h,1)$, $1\leq p< \infty$, defined
as the normed space of all locally integrable weakly differentiable functions
$f:\Omega\to\mathbb{R}$ endowed with the following norm:
\[
\|f\mid W^{1,p}(\Omega,h,1)\|=\|f\,|\,L^{p}(\Omega,h)\|+\|\nabla f\mid L^{p}(\Omega)\|.
\]

The weighted seminormed Sobolev space $L^{1,2}(\Omega, A)$ (associated with the matrix $A$), defined 
as the space of all locally integrable weakly differentiable functions $f:\Omega\to\mathbb{R}$
with the finite seminorm given by:  
\[
\|f\mid L^{1,2}(\Omega,A)\|=\left(\iint\limits_\Omega \left\langle A(z)\nabla f(z),\nabla f(z)\right\rangle\,dxdy \right)^{\frac{1}{2}}.
\]

The corresponding  normed Sobolev space $W^{1,2}(\Omega, A)$ is defined
as the normed space of all locally integrable weakly differentiable functions
$f:\Omega\to\mathbb{R}$ endowed with the following norm:
\[
\|f\mid W^{1,2}(\Omega, A)\|=\|f\,|\,L^{2}(\Omega)\|+\|f\mid L^{1,2}(\Omega,A)\|.
\]

Recall that a homeomorphism $\varphi: \Omega\to \Omega'$, $\Omega,\, \Omega'\subset\mathbb C$, is called a $K$-quasiconformal mapping if $\varphi\in W^{1,2}_{\loc}(\Omega)$ and there exists a constant $1\leq K<\infty$ such that
$$
|D\varphi(w)|^2\leq K |J(w,\varphi)|\,\,\text{for almost all}\,\,w\in\Omega.
$$

Now we give a construction of $A$-quasiconformal mappings connected with the $A$-divergent form elliptic operators.
Let $A \in M^{2 \times 2}(\Omega)$, consider the Beltrami equation:
\begin{equation}\label{BelEq}
\varphi_{\overline{w}}(w)=\mu(w) \varphi_{w}(w),\,\,\, \text{a.e. in}\,\,\, \Omega,
\end{equation}
with the complex dilatation $\mu(w)$ is given by
\begin{equation}\label{ComDil}
\mu(w)=\frac{a_{22}(w)-a_{11}(w)-2ia_{12}(w)}{\det(I+A(w))},\quad I= \begin{pmatrix} 1 & 0 \\ 0 & 1 \end{pmatrix}.
\end{equation}

Then the uniform ellipticity condition \eqref{UEC} can be written as
\begin{equation}\label{OVCE}
|\mu(w)|\leq \frac{K-1}{K+1},\,\,\, \text{a.e. in}\,\,\, \Omega.
\end{equation}

Conversely we can obtain from \eqref{ComDil} (see, for example, \cite{AIM}, p. 412) that :
\begin{equation}\label{Matrix-F}
A(w)= \begin{pmatrix} \frac{|1-\mu|^2}{1-|\mu|^2} & \frac{-2 \Imag \mu}{1-|\mu|^2} \\ \frac{-2 \Imag \mu}{1-|\mu|^2} &  \frac{|1+\mu|^2}{1-|\mu|^2} \end{pmatrix},\,\,\, \text{a.e. in}\,\,\, \Omega.
\end{equation}

So, given any $A \in M^{2 \times 2}(\Omega)$, one produced, by \eqref{OVCE}, the complex dilatation $\mu(w)$, for which, in turn, the Beltrami equation \eqref{BelEq} induces a quasiconformal homeomorphism $\varphi:\Omega \to \varphi(\Omega)$ as its solution, by the Riemann measurable mapping theorem (see, for example, \cite{Ahl66}). We will say that the matrix function $A$ induces the corresponding $A$-quasiconformal mappings $\varphi$ or that $A$ and $\varphi$ are agreed \cite{GNR18}. The $A$-quasiconformal mapping $\psi:\Omega\to\mathbb D$ of simply connected domain $\Omega\subset\mathbb C$ onto the unit disc $\mathbb D\subset\mathbb C$ can be obtained as a composition of $A$-quasiconformal homeomorphism $\varphi:\Omega\to\varphi(\Omega)$ and a conformal mapping $\omega:\varphi(\Omega)\to\mathbb D$.

So, by the given $A$-divergent form elliptic operator defined in a domain $\Omega\subset\mathbb C$ we construct so-called a $A$-quasiconformal mapping $\psi:\Omega\to\mathbb D$ with a quasiconformal coefficient
$$
K=\frac{1+\|\mu\mid L^{\infty}(\Omega)\|}{1-\|\mu\mid L^{\infty}(\Omega)\|},
$$
where $\mu$ is defined by (\ref{ComDil}).

Note that the inverse mapping to the $A$-quasiconformal mapping $\psi: \Omega \to \mathbb D$ is the $A^{-1}$-quasiconformal mapping \cite{GPU2019}.

In \cite{GPU2019} was given a connection between composition operators on Sobolev spaces and $A$-quasiconformal mappings. 

\begin{theorem}\label{L4.1}
Let $\Omega,\Omega'$ be domains in $\mathbb C$. Then a homeomorphism $\varphi :\Omega \to \Omega'$ is an $A$-quasiconformal mapping 
if and only if $\varphi$ induces, by the composition rule $\varphi^{*}(f)=f \circ \varphi$,
an isometry of Sobolev spaces $L^{1,2}(\Omega,A)$ and $L^{1,2}(\Omega')$:
\[
\|\varphi^{*}(f)\,|\,L^{1,2}(\Omega,A)\|=\|f\,|\,L^{1,2}(\Omega')\|
\]
for any $f \in L^{1,2}(\Omega')$.
\end{theorem}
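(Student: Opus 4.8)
The plan is to prove Theorem~\ref{L4.1} by directly relating the seminorm $\|f\mid L^{1,2}(\Omega,A)\|$ to the quadratic form of the Beltrami dilatation, and then showing that quasiconformality of $\varphi$ is precisely the pointwise condition making the change of variables an isometry. First I would establish the pullback formula: for $f\in L^{1,2}(\Omega')$ weakly differentiable, the composition $\varphi^{*}(f)=f\circ\varphi$ satisfies the chain rule $\nabla(f\circ\varphi)(w)=(D\varphi(w))^{t}\,(\nabla f)(\varphi(w))$ almost everywhere, which requires $\varphi\in W^{1,2}_{\loc}$ and the Luzin $N$-property so that the composition preserves weak differentiability. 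Substituting this into the seminorm gives, after the change of variables $z=\varphi(w)$,
\[
\|\varphi^{*}(f)\mid L^{1,2}(\Omega,A)\|^2
=\iint\limits_{\Omega}\bigl\langle A(w)(D\varphi)^{t}(\nabla f)\circ\varphi,\,(D\varphi)^{t}(\nabla f)\circ\varphi\bigr\rangle\,dudv.
\]

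The key algebraic step is to show that the integrand equals $|J(w,\varphi)|\,|(\nabla f)(\varphi(w))|^2$ exactly when $A$ and $\varphi$ are agreed through \eqref{ComDil}--\eqref{Matrix-F}. I would compute the matrix $(D\varphi)A(D\varphi)^{t}$ using the representation of $A$ in terms of $\mu$ from \eqref{Matrix-F} together with the Beltrami relation $\varphi_{\overline w}=\mu\,\varphi_w$ from \eqref{BelEq}; a direct calculation in the $\partial_w,\partial_{\overline w}$ variables should collapse this product to $|J(w,\varphi)|\,I$, since $|J(w,\varphi)|=|\varphi_w|^2-|\varphi_{\overline w}|^2=|\varphi_w|^2(1-|\mu|^2)$ and the off-diagonal contributions cancel precisely because of the specific form of the entries of $A$. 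Once this identity holds pointwise a.e., the integral above becomes $\iint_{\Omega}|J(w,\varphi)|\,|(\nabla f)(\varphi(w))|^2\,dudv$, and the change of variables formula (valid for quasiconformal maps, which satisfy the $N$-property and have $J\neq0$ a.e.) converts it into $\iint_{\Omega'}|\nabla f(z)|^2\,dxdy=\|f\mid L^{1,2}(\Omega')\|^2$, giving the isometry.

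For the converse direction I would argue that if $\varphi^{*}$ is an isometry of $L^{1,2}(\Omega,A)$ onto $L^{1,2}(\Omega')$ for \emph{every} $f\in L^{1,2}(\Omega')$, then testing against coordinate functions and their local linear combinations forces the pointwise identity $(D\varphi(w))\,A(w)^{-1}(D\varphi(w))^{t}=|J(w,\varphi)|^{-1}\cdot(\text{something})$, equivalently $\langle A(w)(D\varphi)^{t}\xi,(D\varphi)^{t}\xi\rangle=|J(w,\varphi)||\xi|^2$ for all $\xi$; polarizing in $\xi$ and using that $f$ ranges over a dense set yields the matrix identity a.e. From the uniform ellipticity \eqref{UEC} this identity bounds $|D\varphi(w)|^2\leq K|J(w,\varphi)|$ a.e., which together with $\varphi\in W^{1,2}_{\loc}$ (inherited from finiteness of the seminorm applied to the coordinate functions of $\Omega'$) is exactly the definition of $K$-quasiconformality; one then checks that the induced dilatation reproduces $A$ via \eqref{Matrix-F}.

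The main obstacle I expect is justifying the change of variables and the chain rule at the level of weak derivatives in the non-Lipschitz setting: quasiconformal maps are only $W^{1,2}_{\loc}$, so one must invoke the Luzin $N$-property, absolute continuity on lines, and the fact that $J(w,\varphi)\neq0$ almost everywhere to push forward the integral without loss, and one must verify that $f\circ\varphi$ is genuinely weakly differentiable with the expected gradient rather than merely differentiable along curves. These analytic regularity facts for planar quasiconformal mappings are standard (see \cite{AIM}), so the substance of the proof is the pointwise linear-algebra identity $(D\varphi)A(D\varphi)^{t}=|J(w,\varphi)|\,I$ and its converse; I would present that computation carefully and cite the measure-theoretic change-of-variables machinery for the rest.
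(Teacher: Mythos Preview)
The paper does not actually contain a proof of Theorem~\ref{L4.1}; it is quoted verbatim from \cite{GPU2019} and used as a black box. So there is no ``paper's own proof'' to compare against here. Your sketch is the natural argument and, as far as the forward direction goes, it is exactly what the cited proof does: one verifies the pointwise pull-back identity $(D\varphi)\,A\,(D\varphi)^{t}=J(w,\varphi)\,I$ from the Beltrami relation and the explicit form~\eqref{Matrix-F}, and then invokes the change-of-variables formula (valid for quasiconformal maps by the Luzin $N$-property and a.e.\ nonvanishing of the Jacobian) to turn the $A$-Dirichlet integral on $\Omega$ into the Euclidean one on $\Omega'$.

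Two small cautions on the converse direction. First, the coordinate functions of $\Omega'$ need not lie in $L^{1,2}(\Omega')$ when $\Omega'$ is unbounded, so the ``test with coordinates'' step should be localized: use $\eta\,x$, $\eta\,y$ with a smooth cutoff $\eta$ to deduce $\varphi\in W^{1,2}_{\loc}(\Omega)$ and the a.e.\ matrix identity on compact subsets. Second, the very formulation ``$\varphi^{*}$ is an isometry'' presupposes that $f\circ\varphi$ is weakly differentiable for every $f\in L^{1,2}(\Omega')$; extracting the ACL property of $\varphi$ from this hypothesis is the genuinely delicate part and is where the composition-operator machinery of \cite{U93,VU02,VG75} enters. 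Your parenthetical remark acknowledges the issue but understates it; in a full write-up this step deserves an explicit reference rather than a one-line aside.
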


This theorem generalizes the well known property of conformal mappings generate the isometry of uniform Sobolev spaces $L^1_2(\Omega)$ and $L^1_2(\Omega')$ (see, for example, \cite{C50}) and refines (in the case $n=2$) the functional characterization of quasiconformal mappings in the terms of isomorphisms of uniform Sobolev spaces \cite{VG75}.

\section{\textbf{THE WEIGHTED EIGENVALUE PROBLEM}}

In this section, using Theorem~\ref{L4.1} we prove the weighted Poincar\'e-Sobolev inequality in the unit disc $\mathbb D$ for so-called quasihyperbolic weights which are Jacobians of mappings inverse to $A$-quasiconformal homeomorphisms.

Denote by $h=h(z):=|J(z,\varphi^{-1})|$ the quasihyperbolic (quasiconformal) weight generated by the $A^{-1}$-quasiconformal mapping $\varphi:\mathbb D\to\Omega$ and 
\[
f_{\mathbb D,h}=\frac{1}{m_h(\mathbb D)} \iint\limits_{\mathbb D} f(z)h(z)dxdy=g_{\Omega}=\frac{1}{|\Omega|}
\iint\limits_\Omega g(w)dudv,
\]
\[
f(z)=g(\varphi(z)),\,\,\, w=\varphi(z).
\]
Here 
\[
m_h(\mathbb D)=\iint\limits_{\mathbb D} h(z)dxdy=\iint\limits_{\mathbb D} |J(z,\varphi^{-1})|dxdy=|\Omega|.
\]

Recall that for $A$-quasiconformal regular domains the Poincar\'e-Sobolev inequality
\begin{equation*}
\|g-g_{\Omega}\,|\,L^{2}(\Omega)\|\leq V^{*}\|g\,|\,L^{1,2}(\Omega, A)\|
\end{equation*}
holds for any function $g\in W^{1,2}(\Omega, A)$ with exact constant $V^{*}$ (see \cite{GPU2019}).

Given this inequality we prove: 
\begin{theorem}\label{WPIn}
Let $A$ be a matrix satisfies the uniform ellipticity condition~\eqref{UEC} and $\Omega \subset \mathbb C$ be an $A$-quasiconformal $\beta$-regular domain.
Then the weighted embedding operator
\begin{equation}\label{EmbedOper}
i_{\mathbb D}:W^{1,2}(\mathbb D) \hookrightarrow L^2{(\mathbb D,h)}
\end{equation}
is compact and for any function $f\in W^{1,2}(\mathbb D, h, 1)$ the inequality 
\[
\|f-f_{\mathbb D, h}\,|\,L^{2}(\mathbb D,h)\|\leq V^{*}\|f\,|\,L^{1,2}(\mathbb D)\|
\]
holds.
\end{theorem}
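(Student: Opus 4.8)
The plan is to transfer everything through the $A$-quasiconformal mapping $\varphi:\mathbb D\to\Omega$ (whose inverse is $\psi=\varphi^{-1}:\Omega\to\mathbb D$, an $A$-quasiconformal mapping). First I would set $f=g\circ\psi$, equivalently $g=f\circ\varphi$, for $g\in W^{1,2}(\Omega,A)$, and use Theorem~\ref{L4.1}: the composition operator $\psi^{*}$ is an isometry $L^{1,2}(\Omega,A)\to L^{1,2}(\mathbb D)$, so
\[
\|f\,|\,L^{1,2}(\mathbb D)\|=\|g\,|\,L^{1,2}(\Omega,A)\|.
\]
Next I would verify the change-of-variables identity for the $L^2$ terms. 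Writing $w=\varphi(z)$, $h(z)=|J(z,\varphi^{-1})|=|J(z,\varphi)|$, we get $\iint_{\mathbb D}|f(z)|^{2}h(z)\,dxdy=\iint_{\Omega}|g(w)|^{2}\,dudv$, which is legitimate because $A$-quasiconformal mappings satisfy Luzin's $N$ and $N^{-1}$ properties and the $\mathcal{ACL}$ change-of-variables formula. In particular $m_h(\mathbb D)=|\Omega|$, so the averages match: $f_{\mathbb D,h}=g_{\Omega}$, exactly as recorded in the displayed identities preceding the theorem. Hence
\[
\|f-f_{\mathbb D,h}\,|\,L^{2}(\mathbb D,h)\|=\|g-g_{\Omega}\,|\,L^{2}(\Omega)\|.
\]

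With these two identities the weighted Poincar\'e--Sobolev inequality in $\mathbb D$ is just the pullback of the Poincar\'e--Sobolev inequality in $\Omega$ recalled just above the theorem, with the \emph{same} exact constant $V^{*}$: for $f\in W^{1,2}(\mathbb D,h,1)$ one has $g=f\circ\varphi\in W^{1,2}(\Omega,A)$ (the two-weighted norm on the left corresponds precisely to the $W^{1,2}(\Omega,A)$ norm on the right under the above identities), so
\[
\|f-f_{\mathbb D,h}\,|\,L^{2}(\mathbb D,h)\|=\|g-g_{\Omega}\,|\,L^{2}(\Omega)\|\le V^{*}\|g\,|\,L^{1,2}(\Omega,A)\|=V^{*}\|f\,|\,L^{1,2}(\mathbb D)\|.
\]
For the compactness of $i_{\mathbb D}:W^{1,2}(\mathbb D)\hookrightarrow L^{2}(\mathbb D,h)$ I would argue likewise: the diagram
\[
W^{1,2}(\mathbb D)\xrightarrow{\ (\psi^{-1})^{*}\ }W^{1,2}(\Omega,A)\xrightarrow{\ i_\Omega\ }L^{2}(\Omega)\xrightarrow{\ \psi^{*}\ }L^{2}(\mathbb D,h)
\]
commutes up to the above isometries, the outer arrows are isometric isomorphisms, and $i_\Omega$ is compact by the cited result of \cite{GPU2019} valid on $A$-quasiconformal regular domains (which $\beta$-regular domains are). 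A composition of bounded operators with a compact one is compact, giving the claim.

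The main obstacle — and the only point needing genuine care — is the $\beta$-regularity hypothesis and the regularity of the change of variables. One must know that on an $A$-quasiconformal $\beta$-regular domain the weight $h$ is integrable (indeed $m_h(\mathbb D)=|\Omega|<\infty$ since $\varphi$ is $\mathcal{ACL}$ with integrable Jacobian) so that $L^{2}(\mathbb D,h)$ is a genuine Banach space and the averaging functional is well defined, and that the pullback $g=f\circ\varphi$ of an arbitrary $f\in W^{1,2}(\mathbb D,h,1)$ really lands in $W^{1,2}(\Omega,A)$ rather than merely in $L^{1,2}(\Omega,A)$; the $\beta$-regularity (via Hölder applied to $|J(z,\varphi^{-1})|^{1-\beta}\in L^{1}$) is exactly what guarantees $g\in L^{2}(\Omega)$ and, more generally, upgrades the seminorm inequality to the normed-space statement. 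I would therefore spell out the chain rule for the composition $g=f\circ\varphi$ with a quasiconformal $\varphi$ (this is standard: $\nabla g(w)=D\varphi^{t}\,\nabla f(\varphi(w))$ a.e., using absolute continuity on lines), and then cite \cite{GPU2019} for the density of smooth functions needed to pass the inequality from a dense class to all of $W^{1,2}(\mathbb D,h,1)$. Everything else is a routine substitution.
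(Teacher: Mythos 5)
Your proposal is correct and follows essentially the same route as the paper: transfer via the $A$-quasiconformal map and the isometry of Theorem~\ref{L4.1}, change of variables to pull back the Poincar\'e--Sobolev inequality from $\Omega$ with the same constant $V^{*}$, density of smooth functions, and the same three-operator factorization (bounded composition operator, compact embedding $i_{\Omega}$, isometric Lebesgue composition operator) for compactness. The only blemish is the notational slip $|J(z,\varphi^{-1})|=|J(z,\varphi)|$ caused by reversing the paper's orientation convention for $\varphi$; the substance of the change-of-variables identities is unaffected.
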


\begin{proof} Define the complex dilatation $\mu(z)$ agreed with the matrix $A$ by 
\begin{equation*}
\mu(z)=\frac{a_{22}(z)-a_{11}(z)-2ia_{12}(z)}{\det(I+A(z))}.
\end{equation*} 
Because the matrix $A$ satisfies the uniform ellipticity condition (\ref{UEC}) then 
\begin{equation*}
|\mu(w)|\leq \frac{K-1}{K+1}<1,\,\,\, \text{a.e. in}\,\,\, \Omega,
\end{equation*}
and by \cite{Ahl66} there exists a $\mu$-quasiconformal mapping $\varphi : \Omega \to \mathbb D$ agreed with the matrix $A$
i.e. an $A$-quasiconformal mapping. 
Hence by Theorem~\ref{L4.1} the composition operator
\[
\varphi^{*}:L^{1,2}(\mathbb D)\to L^{1,2}(\Omega, A),\,\,\, \varphi^{*}(f)= f\circ \varphi
\]
is an isometry. 

Let $f\in L^{1,2}(\mathbb D)$ be a smooth function.
Then the composition $g=f\circ \varphi$ belongs to $L^{1,2}(\Omega, A)$ and because the matrix $A$ satisfies the uniform ellipticity condition (\ref{UEC}) by the Sobolev embedding theorem we can conclude that $g=f\circ \varphi\in W^{1,2}(\Omega, A)$ \cite{M} and the Poincar\'e-Sobolev inequality
\begin{equation}\label{PSIn10}
\|g-g_{\Omega}\,|\,L^{2}(\Omega)\|\leq V^{*}\|g\,|\,L^{1,2}(\Omega, A)\|
\end{equation}
holds with the exact constant $V^{*}=\mu_1[A,\Omega]^{-\frac{1}{2}}$.

Now using the ``transfer" diagram \cite{GG94,GU09} and the change of variable formula for quasiconformal mappings \cite{VGR} we obtain
\begin{multline*}
\|f-f_{\mathbb D, h}\,|\,L^{2}(\mathbb D,h)\| \\
=\left(\iint \limits_{\mathbb D} |f(z)-f_{\mathbb D, h}|^2h(z)dxdy \right)^{\frac{1}{2}}
=\left(\iint \limits_{\mathbb D} |f(z)-f_{\mathbb D, h}|^2 |J(z,\varphi^{-1})|dxdy \right)^{\frac{1}{2}} \\
=\left(\iint \limits_{\Omega} |g(w)-g_{\Omega}|^2dudv \right)^{\frac{1}{2}}
\leq V^{*}\left(\iint \limits_{\Omega} \left\langle A(w) \nabla g(w), \nabla g(w) \right\rangle dudv \right)^{\frac{1}{2}} \\
=V^{*} \left(\iint \limits_{\mathbb D} |\nabla f(z)|^2dxdy \right)^{\frac{1}{2}}
=V^{*}\|f\,|\,L^{1,2}(\mathbb D)\|.
\end{multline*} 
Approximating an arbitrary function  $f \in W^{1,2}(\mathbb D,h,1)$ by smooth functions we have that the weighted Poincar\'e-Sobolev inequality
\[
\|f-f_{\mathbb D, h}\,|\,L^{2}(\mathbb D,h)\| \leq V^{*}\|f\,|\,L^{1,2}(\mathbb D)\|
\] 
holds for any function $f \in W^{1,2}(\mathbb D,h,1)$. 

Further we prove that the embedding operator 
\begin{equation}
i_{\mathbb D}:W^{1,2}(\mathbb D) \hookrightarrow L^2{(\mathbb D,h)}
\end{equation}
is compact.  By the same "transfer" diagram \cite{GG94,GU09} this operator can be represented as a composition of three operators: the composition operator $\varphi^{*}_w :W^{1,2}(\mathbb D)\to W^{1,2}(\Omega,A)$, the compact embedding operator 
\[i_{\Omega}:W^{1,2}(\Omega,A) \hookrightarrow L^2(\Omega)
\] 
and the composition operator for Lebesgue spaces $(\varphi^{-1})^{*}_l:L^2(\Omega) \to L^2(\mathbb D, h)$. 

Firstly we prove that the operator $(\varphi^{-1})^{*}_l$ is an isometry. By the change of variables formula we obtain:
\begin{multline*}
\|f\,|\,L^{2}(\mathbb D,h)\|
=\left(\iint \limits_{\mathbb D} |f(z)|^2h(z)~dxdy \right)^{\frac{1}{2}} \\
=\left(\iint \limits_{\mathbb D} |f(z)|^2 |J(z,\varphi^{-1})|~dxdy \right)^{\frac{1}{2}} 
=\left(\iint \limits_{\Omega} |f\circ \varphi(w)|^2~dudv \right)^{\frac{1}{2}}
=\|g\,|\,L^{2}(\Omega)\|.
\end{multline*}

Secondly we prove that the composition operator
\[
(\varphi^{-1})_w^{*}:W^{1,2}(\mathbb D)\to W^{1,2}(\Omega, A)
\]
is bounded.

By Theorem~\ref{L4.1} $A$-quasiconformal mappings $\varphi: \Omega \to \mathbb D$ generate a bounded composition operator on seminormed Sobolev spaces
\[
\varphi^{*}:L^{1,2}(\mathbb D) \to L^{1,2}(\Omega, A).
\]
Since the matrix $A$ satisfies to the uniform ellipticity condition~\eqref{UEC} then the norm of Sobolev space $W^{1,2}(\Omega, A)$ is equivalent to the norm of Sobolev space $W^{1,2}(\Omega)$ and by \cite{GPU19} we obtain that the composition operator on normed Sobolev spaces
\[
\varphi_w^{*}:W^{1,2}(\mathbb D)\to W^{1,2}(\Omega, A)
\]
is bounded.

Hence the embedding operator $i_{\mathbb D}$ is compact as a composition of the compact operator $i_{\Omega}$ and bounded operators $\varphi^{*}_w$ and $(\varphi^{-1})^{*}_l$.

\end{proof}

According to Theorem~\ref{WPIn} the weighted embedding operator is compact. By standard arguments we conclude that the spectrum of the weighted eigenvalue problem~\eqref{WFWEP} with quasihyperbolic (quasiconformal)
weights $h$ is discrete and can be written in the form of a non-decreasing sequence
\[
0=\mu_1[h,\mathbb D] < \mu_2[h,\mathbb D] \leq \ldots \leq \mu_n[h,\mathbb D] \leq \ldots,
\] 
where each eigenvalue is repeated as many time as its multiplicity (see, for example, \cite{B72, GP2009}).
The weighted eigenvalue problem in the unit disc $\mathbb D$ is equivalent to the eigenvalue problem in the domain $\Omega$ and 
\begin{equation}\label{EqEigenvalues}
\mu_n[h,\mathbb D]=\mu_n[A, \Omega],\,\,\, n\in \mathbb N.
\end{equation}
For weighted eigenvalues we have the following properties \cite{BGU16}:

($i$)  $\lim \limits_{n \to \infty} \mu_n[h,\mathbb D]=\infty$.

($ii$) for each $n\in \mathbb N$
\begin{multline}\label{MinMax}
\mu_n[A, \Omega]= \inf \limits_{\substack{L \subset W^{1,2}(\Omega,A)\\ \dim L=n}}
\sup \limits_{\substack{g\in L \\ g \neq 0}} 
\frac{\iint\limits_{\Omega} \left\langle A(w) \nabla g, \nabla g\right\rangle dudv}{\iint\limits_{\Omega} |g|^2dudv} \\
= \inf \limits_{\substack{L \subset W^{1,2}(\mathbb D,h,1)\\ \dim L=n}}
\sup \limits_{\substack{f\in L \\ f \neq 0}} 
\frac{\iint\limits_{\mathbb D} |\nabla f|^2dxdy}{\iint\limits_{\mathbb D} |f|^2h(z)dxdy}=\mu_n[h,\mathbb D]
\end{multline}
(Min-Max Principle), and
\begin{equation}\label{MaxPr}
\mu_n[h,\mathbb D]= \sup \limits_{\substack{f\in M_n \\ f \neq 0}} 
\frac{\iint\limits_{\mathbb D} |\nabla f|^2dxdy}{\iint\limits_{\mathbb D} |f|^2h(z)dxdy}
\end{equation} 
where
\[
M_n= \spn \{\varphi_1[h], \ldots , \varphi_n[h]\}
\]
and $\{\varphi_n[h]\}_{n=1}^{\infty}$ is an orthonormal (in the space $W^{1,2}(\mathbb D,h,1)$) set of eigenfunctions corresponding to the eigenvalues $\{\mu_n[h,\mathbb D]\}_{n=1}^{\infty}$.

($iii$) $\mu_1[h,\mathbb D]=0$ and $\varphi_1=\frac{1}{\sqrt{m_h(\mathbb D)}}$.
For $n \geq 2$ alongside with \eqref{MaxPr} we have
\begin{equation}\label{MaxPr-1}
\mu_n[h,\mathbb D]= \sup \limits_{\substack{f\in M_n \\ f \neq 0}} 
\frac{\iint\limits_{\mathbb D} |\nabla f|^2dxdy}{\iint\limits_{\mathbb D} |f-f_{\mathbb D,h}|^2h(z)dxdy}.
\end{equation} 
(It may happen that the above fraction takes the form $\frac{0}{0}$. In this case we assume that $\frac{0}{0}=0$.)

\section{\textbf{THE $L^{1,2}$-SEMINORM ESTIMATES}}

In this section we estimate variation of Neumann eigenvalues of two weighted eigenvalue problems in the unit disc $\mathbb D$:
\[
\iint\limits _{\mathbb D} \left\langle \nabla f(z), \nabla\overline{{g(z)}}\right\rangle~dxdy=\mu\iint\limits _{\mathbb D}h_{1}(z)f(z)\overline{g(z)}~dxdy\,,\,\,~~\forall g\in W^{1,2}(\mathbb D,h,1)
\]
 and
\[
\iint\limits _{\mathbb D} \left\langle \nabla f(z), \nabla\overline{{g(z)}}\right\rangle~dxdy=\mu\iint\limits _{\mathbb D}h_{2}(z)f(z)\overline{{g(z)}}~dxdy\,,\,\,~~\forall g\in W^{1,2}(\mathbb D,h,1).
\]

The following result in the case of hyperbolic (conformal) weights was proved in (\cite{BGU16}, Lemma 3.1). In the present paper we formulate this lemma in the case of quasihyperbolic (quasiconformal) weights.

\begin{lemma}
 \label{lem:TwoWeight} Let $\mathbb D \subset \mathbb{C}$ be the unit disc
and let $h_{1}$, $h_{2}$ be quasiconformal weights on $\mathbb D$.
Suppose that there exists a constant $B>0$ such that
\begin{multline}
\max \left\{\iint\limits _{\mathbb D}|h_{1}(z)-h_{2}(z)||f-f_{\mathbb D,h_1}|^{2}~dxdy,
\iint\limits _{\mathbb D}|h_{1}(z)-h_{2}(z)||f-f_{\mathbb D,h_2}|^{2}~dxdy\right\}\\
\leq B\iint\limits _{\mathbb D}|\nabla f|^{2}~dxdy,\,\,
\forall f\in L^{1,2}(\mathbb D).\label{EqvWW}
\end{multline}

Then for any $n\in\mathbb{N}$
\begin{equation}
|\mu_{n}[h_{1},\mathbb D]-\mu_{n}[h_{2},\mathbb D]|\leq \frac{B\tilde c_n}{1+B\sqrt{\tilde c_n}}< B\tilde c_n\,,\label{LemEq}
\end{equation}
where
\begin{equation}\label{tilde c_n}
\tilde c_n=\max\{\mu^2_{n}[h_{1},\mathbb D],\mu^2_{n}[h_{2},\mathbb D]\} \,.
\end{equation}
\end{lemma}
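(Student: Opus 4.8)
The plan is to follow the standard min-max comparison argument from (\cite{BGU16}, Lemma 3.1), adapted to quasiconformal weights. Fix $n\in\mathbb N$ and assume without loss of generality that $\mu_n[h_1,\mathbb D]\geq\mu_n[h_2,\mathbb D]$, so that $\tilde c_n=\mu_n^2[h_1,\mathbb D]$ and the claim becomes $\mu_n[h_1,\mathbb D]-\mu_n[h_2,\mathbb D]\leq B\tilde c_n/(1+B\sqrt{\tilde c_n})$. The key device is to use the $n$-dimensional space $M_n=\spn\{\varphi_1[h_2],\dots,\varphi_n[h_2]\}$ of eigenfunctions for the $h_2$-problem as a test space in the min-max characterization \eqref{MinMax} for $\mu_n[h_1,\mathbb D]$. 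Thus $\mu_n[h_1,\mathbb D]\leq\sup_{f\in M_n,\,f\neq 0}\|\nabla f\,|\,L^2(\mathbb D)\|^2/\|f\,|\,L^2(\mathbb D,h_1)\|^2$.

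Next I would estimate the Rayleigh quotient on $M_n$ from below by the $h_2$-Rayleigh quotient. For $f\in M_n$ write $\|f\,|\,L^2(\mathbb D,h_1)\|^2 = \iint_{\mathbb D}|f|^2h_2\,dxdy + \iint_{\mathbb D}|f|^2(h_1-h_2)\,dxdy$. Since on $M_n$ we have $\iint_{\mathbb D}|\nabla f|^2\,dxdy\leq\mu_n[h_2,\mathbb D]\iint_{\mathbb D}|f|^2h_2\,dxdy$ by \eqref{MaxPr}, and since for $n\geq 2$ the quotient is unchanged if we replace $f$ by $f-f_{\mathbb D,h_2}$ (here use \eqref{MaxPr-1} and that the gradient of a constant vanishes; for $n=1$ both eigenvalues are $0$ and there is nothing to prove), I may assume $f_{\mathbb D,h_2}=0$. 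Then hypothesis \eqref{EqvWW} applied with $f-f_{\mathbb D,h_2}=f$ gives $\big|\iint_{\mathbb D}|f|^2(h_1-h_2)\,dxdy\big|\leq B\iint_{\mathbb D}|\nabla f|^2\,dxdy$. Hence
\[
\|f\,|\,L^2(\mathbb D,h_1)\|^2\geq \frac{1}{\mu_n[h_2,\mathbb D]}\iint_{\mathbb D}|\nabla f|^2\,dxdy - B\iint_{\mathbb D}|\nabla f|^2\,dxdy,
\]
which after dividing yields
\[
\frac{\iint_{\mathbb D}|\nabla f|^2\,dxdy}{\|f\,|\,L^2(\mathbb D,h_1)\|^2}\leq\frac{\mu_n[h_2,\mathbb D]}{1-B\,\mu_n[h_2,\mathbb D]}
\]
provided $B\mu_n[h_2,\mathbb D]<1$ (the degenerate cases are handled separately, as in \cite{BGU16}). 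Taking the supremum over $f\in M_n$ gives $\mu_n[h_1,\mathbb D]\leq\mu_n[h_2,\mathbb D]/(1-B\mu_n[h_2,\mathbb D])$, so $\mu_n[h_1,\mathbb D]-\mu_n[h_2,\mathbb D]\leq B\mu_n[h_1,\mathbb D]\mu_n[h_2,\mathbb D]\leq B\tilde c_n$, and the sharper bound $B\tilde c_n/(1+B\sqrt{\tilde c_n})$ follows by feeding $\mu_n[h_1,\mathbb D]\leq\sqrt{\tilde c_n}$ back into the inequality $\mu_n[h_1,\mathbb D]-\mu_n[h_2,\mathbb D]\leq B\mu_n[h_1,\mathbb D]\mu_n[h_2,\mathbb D]$ and solving for the difference.

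The main obstacle, and the reason the two-sided hypothesis in \eqref{EqvWW} is needed, is that swapping the roles of $h_1$ and $h_2$ requires the \emph{same} constant $B$ to control $\iint_{\mathbb D}|h_1-h_2||f-f_{\mathbb D,h_1}|^2$; once both bounds are available the argument is symmetric, and combining the two one-sided estimates removes the assumption $\mu_n[h_1,\mathbb D]\geq\mu_n[h_2,\mathbb D]$. A secondary technical point is the careful treatment of the case $B\mu_n[h_i,\mathbb D]\geq 1$ and of $n=1$: in the former the asserted bound $B\tilde c_n/(1+B\sqrt{\tilde c_n})$ must be checked to exceed the trivial bound coming from $\mu_1=0$ and monotonicity of eigenvalues, exactly as in the conformal-weight case of \cite{BGU16}. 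Everything else — the discreteness of the spectrum, the min-max and max formulas \eqref{MinMax}--\eqref{MaxPr-1}, and the reduction to mean-zero test functions — is supplied by Theorem~\ref{WPIn} and the properties $(i)$--$(iii)$ recorded above.
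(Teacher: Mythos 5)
The paper itself gives no proof of this lemma (it is imported verbatim from \cite{BGU16}, Lemma 3.1), so the comparison is with that standard argument. Your architecture is the right one and coincides with it: test the min-max characterization \eqref{MinMax} of $\mu_n[h_1,\mathbb D]$ on the span $M_n$ of the first $n$ eigenfunctions of the $h_2$-problem, bound the weighted denominator from below by splitting off $\iint(h_1-h_2)|\cdot|^2$ and invoking \eqref{EqvWW} and \eqref{MaxPr-1}, then run the algebra $\mu_n[h_1,\mathbb D]-\mu_n[h_2,\mathbb D]\le B\mu_n[h_1,\mathbb D]\mu_n[h_2,\mathbb D]$ to extract $B\tilde c_n/(1+B\sqrt{\tilde c_n})$. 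The final algebraic step and the case $n=1$ are fine.

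There is, however, a genuine gap in the reduction ``I may assume $f_{\mathbb D,h_2}=0$.'' The quotient you must bound is $\|\nabla f\,|\,L^2(\mathbb D)\|^2/\|f\,|\,L^2(\mathbb D,h_1)\|^2$, and its denominator is \emph{not} invariant under $f\mapsto f-f_{\mathbb D,h_2}$; subtracting the $h_2$-mean can strictly \emph{increase} $\|f\,|\,L^2(\mathbb D,h_1)\|$ (for instance when $f_{\mathbb D,h_1}=0\neq f_{\mathbb D,h_2}$, since $c\mapsto\|f-c\,|\,L^2(\mathbb D,h_1)\|$ is minimized at $c=f_{\mathbb D,h_1}$). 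Consequently the supremum over $\{f\in M_n:\ f_{\mathbb D,h_2}=0\}$ can be strictly smaller than the supremum over all of $M_n$, and an upper bound for the restricted supremum does not bound $\mu_n[h_1,\mathbb D]$ through \eqref{MinMax}. The correct pairing is the opposite of the one you use: while testing with $h_2$-eigenfunctions one subtracts the $h_1$-mean and applies the \emph{first} term of the max in \eqref{EqvWW}, namely
\begin{multline*}
\iint\limits_{\mathbb D}|f|^2h_1\,dxdy \ \ge\ \iint\limits_{\mathbb D}|f-f_{\mathbb D,h_1}|^2h_1\,dxdy\ \ge\ \iint\limits_{\mathbb D}|f-f_{\mathbb D,h_1}|^2h_2\,dxdy-B\iint\limits_{\mathbb D}|\nabla f|^2dxdy\\
\ge\ \iint\limits_{\mathbb D}|f-f_{\mathbb D,h_2}|^2h_2\,dxdy-B\iint\limits_{\mathbb D}|\nabla f|^2dxdy,
\end{multline*}
using that $c\mapsto\|f-c\,|\,L^2(\mathbb D,h_2)\|$ is minimized at $c=f_{\mathbb D,h_2}$; only then does \eqref{MaxPr-1} for $h_2$ apply. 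So both terms of the max are indeed needed, but each serves one direction of the symmetric argument, not the roles you assign them. Finally, the degenerate case $B\mu_n[h_2,\mathbb D]\ge1$ should not be deferred: there $\mu_n[h_2,\mathbb D]\ge1/B$, hence $\mu_n[h_1,\mathbb D]-\mu_n[h_2,\mathbb D]\le\mu_n[h_1,\mathbb D]-1/B\le B\mu_n^2[h_1,\mathbb D]/(1+B\mu_n[h_1,\mathbb D])$ by the elementary inequality $(t-1)(t+1)\le t^2$ with $t=B\mu_n[h_1,\mathbb D]$; the ``trivial bound from $\mu_1=0$'' you allude to is not sufficient.
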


\vskip 0.2cm 

Further we estimate the constant $B$ in Lemma \ref{EqvWW} in terms of an $L^s$-distance between weights. Similarly to Theorem \ref{WPIn} we have
\begin{equation}\label{InPS}
\|f-f_{\mathbb D,h_k} \mid L^{r}(\mathbb D,h_k)\| \leq B_{r,2}(\mathbb D,h_k) \|\nabla f \mid L^{2}(\mathbb D)\|
\end{equation}
for $r \geq 1$ and any function $f \in W^{1,2}(\mathbb D,h_k,1)$, where $h_k=|J(z,\varphi_k^{-1})|$, $k=1,2$, are the quasiconformal weights defined by inverse mappings to $A$-quasiconformal homeomorphisms $\varphi_k:\Omega_k \to \mathbb D$. Here $B_{r,2}(\mathbb D,h_k)$, $k=1,2$, are best positive constants in these inequalities.

\begin{lemma}\label{lem:TwoWeiPol} 
Let   $h_{1}$, $h_{2}$ be quasiconformal weights on
$\mathbb{D}$ such that
\begin{equation}
d_{s}(h_{1},h_{2}):= \left\|(h_1-h_2) \left(\min\{h_1,h_2\}\right)^{\frac{1-s}{s}}\mid L^{s}(\mathbb{D})\right\|<\infty  \label{EqvWWpol}
\end{equation}
 for some $1<s\le\infty$.

Then  inequality $(\ref{EqvWW})$ holds with the constant
\begin{equation}
B=B_{\frac{2s}{s-1},2}^2(\mathbb D,h)\,d_{s}(h_{1},h_{2}), \label{Lem2Es}
\end{equation}
where
\[
B_{\frac{2s}{s-1},2}(\mathbb D,h)=\max\left\{B_{\frac{2s}{s-1},2}(\mathbb D,h_1),B_{\frac{2s}{s-1},2}(\mathbb D,h_2)\right\}.
\]
\end{lemma}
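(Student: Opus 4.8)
The plan is to bound each of the two integrals appearing inside the maximum in \eqref{EqvWW} separately, using Hölder's inequality to split off the weight difference $h_1-h_2$, and then to apply the weighted Poincaré--Sobolev inequality \eqref{InPS} with a suitably chosen exponent $r$. Fix $k\in\{1,2\}$ and consider $\iint_{\mathbb D}|h_1-h_2|\,|f-f_{\mathbb D,h_k}|^2\,dxdy$. First I would write the integrand as
\[
|h_1-h_2|\,|f-f_{\mathbb D,h_k}|^2
=\Bigl(|h_1-h_2|\,(\min\{h_1,h_2\})^{\frac{1-s}{s}}\Bigr)\cdot\Bigl((\min\{h_1,h_2\})^{\frac{s-1}{s}}|f-f_{\mathbb D,h_k}|^2\Bigr),
\]
and apply Hölder's inequality with exponents $s$ and $s'=\frac{s}{s-1}$ to the two factors. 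The first factor gives exactly $d_s(h_1,h_2)$ by \eqref{EqvWWpol}. For the second factor one gets
\[
\left(\iint_{\mathbb D}(\min\{h_1,h_2\})\,|f-f_{\mathbb D,h_k}|^{\frac{2s}{s-1}}\,dxdy\right)^{\frac{s-1}{s}}.
\]

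The key observation for the second factor is that $\min\{h_1,h_2\}\le h_k$ pointwise, so this quantity is dominated by
\[
\left(\iint_{\mathbb D}h_k\,|f-f_{\mathbb D,h_k}|^{\frac{2s}{s-1}}\,dxdy\right)^{\frac{s-1}{s}}
=\|f-f_{\mathbb D,h_k}\mid L^{\frac{2s}{s-1}}(\mathbb D,h_k)\|^{2}.
\]
Now apply \eqref{InPS} with $r=\frac{2s}{s-1}$ (note $r\ge 1$ since $1<s\le\infty$): this bounds the above by $B_{\frac{2s}{s-1},2}^2(\mathbb D,h_k)\,\|\nabla f\mid L^2(\mathbb D)\|^2\le B_{\frac{2s}{s-1},2}^2(\mathbb D,h)\,\iint_{\mathbb D}|\nabla f|^2\,dxdy$. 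Combining the two factors yields
\[
\iint_{\mathbb D}|h_1-h_2|\,|f-f_{\mathbb D,h_k}|^2\,dxdy
\le B_{\frac{2s}{s-1},2}^2(\mathbb D,h)\,d_s(h_1,h_2)\,\iint_{\mathbb D}|\nabla f|^2\,dxdy,
\]
and since this holds for both $k=1$ and $k=2$ with the same right-hand side, taking the maximum over $k$ gives \eqref{EqvWW} with $B$ as in \eqref{Lem2Es}. Finally, one should check that \eqref{InPS} is indeed available here: by Theorem~\ref{WPIn} the weighted embedding $W^{1,2}(\mathbb D)\hookrightarrow L^2(\mathbb D,h_k)$ is compact for $A$-quasiconformal $\beta$-regular domains, and by the same transfer-diagram argument (combined with the higher-integrability / Sobolev embedding for $\beta$-regular domains) the stronger embedding into $L^r(\mathbb D,h_k)$ holds for the relevant range of $r$, which is exactly what \eqref{InPS} records; a density argument extends the inequality from smooth $f$ to all $f\in W^{1,2}(\mathbb D,h_k,1)$, and hence to all $f\in L^{1,2}(\mathbb D)$ after subtracting the $h_k$-mean.

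The only genuinely delicate point is the interplay between the exponent $\frac{2s}{s-1}$ demanded by Hölder and the range of $r$ for which \eqref{InPS} actually holds: the constant $B_{r,2}(\mathbb D,h_k)$ is finite only when the weighted Poincaré--Sobolev inequality is valid, and for a $\beta$-regular domain this forces a relation between $r$, $s$ and $\beta$ (this is why the main theorem features the exponent $\frac{4\beta}{\beta-1}$, corresponding to $s=\frac{\beta}{\beta-1}$ and $r=\frac{2s}{s-1}=\frac{4\beta}{\beta-1}$... wait, let me recompute: $s=\frac{2\beta}{\beta+1}$ would give $\frac{2s}{s-1}=\frac{4\beta}{\beta-1}$). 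I would therefore be careful to state the lemma with the standing assumption that the relevant best constants $B_{\frac{2s}{s-1},2}(\mathbb D,h_k)$ are finite — which is guaranteed in the $A$-quasiconformal $\beta$-regular setting that the paper works in — rather than claiming finiteness for all $1<s\le\infty$ unconditionally. Everything else is a routine application of Hölder's inequality and the pointwise bound $\min\{h_1,h_2\}\le h_k$.
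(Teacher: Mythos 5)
Your proof is correct and follows essentially the same route as the paper: a single Hölder split with exponents $s$ and $\frac{s}{s-1}$ followed by the weighted Poincar\'e--Sobolev inequality \eqref{InPS} with $r=\frac{2s}{s-1}$. The only (immaterial) difference is that you factor the integrand through $\min\{h_1,h_2\}$ and use $\min\{h_1,h_2\}\le h_k$ on the Poincar\'e factor, while the paper factors through $h_k$ and uses the same pointwise bound (via the negative exponent $\frac{1-s}{s}$) on the $d_s$ factor; your closing caveat about the finiteness of $B_{\frac{2s}{s-1},2}(\mathbb D,h_k)$ is a fair point that the paper leaves implicit in its $\beta$-regularity assumptions.
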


\begin{proof}
By the H\"older inequality and Poincar\'e-Sobolev inequality \eqref{InPS} we get for $k=1,2$
\begin{multline*}
\iint\limits _{\mathbb{D}}|h_{1}(z)-h_{2}(z)||f(z)-f_{\mathbb D,h_k}|^{2}~dxdy
\\
=\iint\limits _{\mathbb{D}}|h_{1}(z)-h_{2}(z)|h_k^{\frac{1-s}{s}} h_k^{\frac{s-1}{s}}|f(z)-f_{\mathbb D,h_k}|^{2}~dxdy  \\
\leq \left\|(h_1-h_2) h_k^{\frac{1-s}{s}}\mid L^{s}(\mathbb{D})\right\|
\left(\iint\limits _{\mathbb{D}}h_{k}(z)|f(z)-f_{\mathbb D,h_k}|^{\frac{2s}{s-1}}~dxdy\right)^{\frac{s-1}{s}} \\
\leq B_{\frac{2s}{s-1},2}^2(\mathbb D,h)\,d_{s}(h_{1},h_{2})
\iint\limits _{\mathbb{D}}|\nabla f(z)|^2~dxdy.
\end{multline*}
\end{proof}

By the two previous lemmas we have the following result for variations of the weighted eigenvalues:
\begin{theorem}\label{thm:TwoWW} 
Let  $h_{1}$, $h_{2}$ be quasiconformal weights on
$\mathbb{D}$. Assume that $d_{s}(h_{1},h_{2})<\infty$ for some $s>1$.

Then for any $n\in\mathbb{N}$
\[
|\mu_{n}[h_{1},\mathbb D]-\mu_{n}[h_{2}, \mathbb D]|\leq \tilde c_{n}B_{\frac{2s}{s-1},2}^2(\mathbb D,h) d_{s}(h_{1},h_{2})\,.
\]
\end{theorem}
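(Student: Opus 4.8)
The plan is simply to chain the two preceding lemmas, with a brief check that their hypotheses are met. First I would invoke Lemma~\ref{lem:TwoWeiPol}: since by assumption $d_{s}(h_{1},h_{2})<\infty$ for some $s>1$, that lemma asserts that the two-weight comparison inequality \eqref{EqvWW} holds with the explicit constant
\[
B=B_{\frac{2s}{s-1},2}^2(\mathbb D,h)\,d_{s}(h_{1},h_{2}),
\qquad
B_{\frac{2s}{s-1},2}(\mathbb D,h)=\max\bigl\{B_{\frac{2s}{s-1},2}(\mathbb D,h_1),B_{\frac{2s}{s-1},2}(\mathbb D,h_2)\bigr\}.
\]
At this point one should record that $B<\infty$: the exponent $r=\tfrac{2s}{s-1}$ satisfies $r\ge 1$ for every $s>1$, and each $B_{\frac{2s}{s-1},2}(\mathbb D,h_k)$ is the best (hence finite) constant in the weighted Poincar\'e--Sobolev inequality \eqref{InPS}, whose validity follows, similarly to Theorem~\ref{WPIn}, from the compactness of the weighted embedding $i_{\mathbb D}:W^{1,2}(\mathbb D)\hookrightarrow L^{2}(\mathbb D,h_k)$ for the $A$-quasiconformal $\beta$-regular domains $\Omega_k$. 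If $h_1\equiv h_2$ the statement is trivial, so we may assume $B>0$.

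With \eqref{EqvWW} available for this value of $B$, I would then apply Lemma~\ref{lem:TwoWeight}. Its conclusion \eqref{LemEq} gives, for every $n\in\mathbb N$,
\[
|\mu_{n}[h_{1},\mathbb D]-\mu_{n}[h_{2},\mathbb D]|\le \frac{B\tilde c_n}{1+B\sqrt{\tilde c_n}}<B\tilde c_n,
\qquad \tilde c_n=\max\{\mu_n^2[h_1,\mathbb D],\mu_n^2[h_2,\mathbb D]\}.
\]
Substituting the value of $B$ supplied by Lemma~\ref{lem:TwoWeiPol} into the right-hand bound $B\tilde c_n$ yields exactly
\[
|\mu_{n}[h_{1},\mathbb D]-\mu_{n}[h_{2},\mathbb D]|\le \tilde c_n\,B_{\frac{2s}{s-1},2}^2(\mathbb D,h)\,d_{s}(h_{1},h_{2}),
\]
which is the assertion of the theorem.

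There is essentially no hard step here: the argument is a two-line composition of Lemmas~\ref{lem:TwoWeiPol} and~\ref{lem:TwoWeight}. The only points deserving care are (a) confirming that the single hypothesis $d_{s}(h_{1},h_{2})<\infty$ is precisely what Lemma~\ref{lem:TwoWeiPol} requires, so that no further regularity of $h_1,h_2$ need be imposed, and (b) noting that the Sobolev--Poincar\'e constants $B_{\frac{2s}{s-1},2}(\mathbb D,h_k)$ entering the estimate are finite, which rests on Theorem~\ref{WPIn}. One could alternatively retain the sharper non-strict bound $\tfrac{B\tilde c_n}{1+B\sqrt{\tilde c_n}}$ from \eqref{LemEq}, but the cleaner inequality $\tilde c_n B^2_{\frac{2s}{s-1},2}(\mathbb D,h)\,d_s(h_1,h_2)$ is all that is claimed.
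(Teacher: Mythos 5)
Your proposal is correct and matches the paper's argument exactly: the paper derives Theorem~\ref{thm:TwoWW} precisely by combining Lemma~\ref{lem:TwoWeiPol} (which supplies the constant $B=B_{\frac{2s}{s-1},2}^2(\mathbb D,h)\,d_{s}(h_{1},h_{2})$ in \eqref{EqvWW}) with Lemma~\ref{lem:TwoWeight} (which converts \eqref{EqvWW} into the eigenvalue bound $B\tilde c_n$). Your additional remarks on the finiteness of the Poincar\'e--Sobolev constants are a sensible bit of extra care but do not change the route.
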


\section{\textbf{ON ``\,DISTANCE\," $d_s(h_1, h_2)$ FOR QUASICONFORMAL WEIGHTS $h_1(z)$, $h_2(z)$}}

Let $\Omega_{k}$, $k=1,2$, be bounded simply connected domains in $\mathbb C$ and $A\in M^{2 \times 2}(\Omega_{k})$. Assume that there exist $A$-quasiconformal mappings $\varphi_1:\Omega_1 \to \mathbb D$ and $\varphi_2:\Omega_2 \to \mathbb D$.
Recall that for quasiconformal mappings $\varphi^{-1}:\mathbb D\to\Omega$
$$
J_{\varphi^{-1}}(z):=\lim\limits_{r\to 0}\frac{|\varphi^{-1}(B(z,r))|}{|B(z,r)|}=|J(z, \varphi^{-1})|
$$
for almost all $z\in\mathbb D$. 

Now we estimate the quantity $d_{s}(h_{1},h_{2})$ using the $L^2$-norms of weights.
\begin{lemma}\label{Cor-5.1}
Let $\varphi_1:\Omega_1 \to \mathbb D$ and $\varphi_2:\Omega_2 \to \mathbb D$
be $A$-quasiconformal homeomorphisms and $h_1$, $h_2$ be the corresponding
quasihyperbolic weights. Assume that for some $\beta >1$
\[
\Phi_{\beta}(\varphi_1,\varphi_2)=\left(\iint \limits_{\mathbb D}\max \left\{\frac{|J(z,\varphi_1^{-1})|^{\beta}}{|J(z,\varphi_2^{-1}|^{\beta -1})}, 
\frac{|J(z,\varphi_2^{-1})|^{\beta}}{|J(z,\varphi_1^{-1})|^{\beta -1}}\right\}
\right)^{\frac{1}{2\beta}}<\infty.
\]

Then for $s=\frac{2\beta}{\beta +1}$
\[
d_{s}(h_{1},h_{2}) \leq 2 \Phi_{\beta}(\varphi_1,\varphi_2) \cdot
\left(\iint\limits_{\mathbb{D}}\left(|J(z,\varphi_1^{-1})|^{\frac{1}{2}}-|J(z,\varphi_2^{-1})|^{\frac{1}{2}}\right)^2dxdy\right)^{\frac{1}{2}},
\]
where $J(z,\varphi_1^{-1}),\, J(z,\varphi_2^{-1})$ are Jacobians inverse mappings to $A$-quasiconformal homeomorphisms $\varphi_1:\Omega_1 \to \mathbb D$ and $\varphi_2:\Omega_2 \to \mathbb D$ respectively. 
\end{lemma}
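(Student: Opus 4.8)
The plan is to bound the quantity $d_s(h_1,h_2)$ by splitting the factor $|h_1-h_2|$ appearing in its definition as a difference of squares, namely $h_1 - h_2 = \bigl(h_1^{1/2}-h_2^{1/2}\bigr)\bigl(h_1^{1/2}+h_2^{1/2}\bigr)$, and then applying H\"older's inequality to separate the "smallness" factor $h_1^{1/2}-h_2^{1/2}$ (which we want in $L^2$) from the remaining terms, which will be controlled by $\Phi_\beta(\varphi_1,\varphi_2)$.

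First I would recall that $s=\frac{2\beta}{\beta+1}$, so that $\frac{1-s}{s}=\frac{1-\beta}{2\beta}$ and the conjugate exponent to $s/2$ (or rather, the exponent pairing we need) works out to $\beta$; this is the arithmetic that makes the choice of $s$ natural. Starting from
\[
d_s(h_1,h_2)=\left\|(h_1-h_2)\bigl(\min\{h_1,h_2\}\bigr)^{\frac{1-s}{s}}\,\Big|\,L^s(\mathbb D)\right\|,
\]
I would write $|h_1-h_2|=\bigl|h_1^{1/2}-h_2^{1/2}\bigr|\cdot\bigl(h_1^{1/2}+h_2^{1/2}\bigr)$ and use the elementary bound $h_1^{1/2}+h_2^{1/2}\leq 2\max\{h_1,h_2\}^{1/2}$. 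Thus the integrand of $d_s^s$ is at most
\[
2^s\,\bigl|h_1^{1/2}-h_2^{1/2}\bigr|^{s}\cdot \max\{h_1,h_2\}^{s/2}\cdot\min\{h_1,h_2\}^{\frac{(1-s)s}{s}}
= 2^s\,\bigl|h_1^{1/2}-h_2^{1/2}\bigr|^{s}\cdot\frac{\max\{h_1,h_2\}^{s/2}}{\min\{h_1,h_2\}^{s-1}}.
\]

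Next I would apply H\"older's inequality on $\mathbb D$ with the pair of exponents $\bigl(\tfrac{2}{s},\tfrac{2}{2-s}\bigr)$ to the product $\bigl|h_1^{1/2}-h_2^{1/2}\bigr|^{s}\cdot \dfrac{\max\{h_1,h_2\}^{s/2}}{\min\{h_1,h_2\}^{s-1}}$: the first factor raised to $\tfrac2s$ gives $\bigl(h_1^{1/2}-h_2^{1/2}\bigr)^2$, producing exactly the $L^2$-norm claimed; the second factor raised to $\tfrac{2}{2-s}$ must be checked to equal the maximand inside $\Phi_\beta$, i.e.\ one needs $\frac{s/2}{1}\cdot\frac{2}{2-s}=\beta$ and $(s-1)\cdot\frac{2}{2-s}=\beta-1$, which is precisely what $s=\frac{2\beta}{\beta+1}$ delivers. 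Since $\dfrac{\max\{h_1,h_2\}^{\beta}}{\min\{h_1,h_2\}^{\beta-1}}=\max\left\{\dfrac{h_1^\beta}{h_2^{\beta-1}},\dfrac{h_2^\beta}{h_1^{\beta-1}}\right\}$ and $h_k=|J(z,\varphi_k^{-1})|$, the $L^{2/(2-s)}$-factor of that term is exactly $\Phi_\beta(\varphi_1,\varphi_2)$ raised to the appropriate power. Taking $s$-th roots and tracking the constant $2$ through $(2^s)^{1/s}=2$ yields the stated inequality.

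The main obstacle, modest as it is, will be the bookkeeping of exponents — verifying that the three conditions $\frac{s}{2}\cdot\frac{2}{s}=1$, $\frac{s}{2}\cdot\frac{2}{2-s}=\beta$, $(s-1)\cdot\frac{2}{2-s}=\beta-1$ are simultaneously consistent with a single choice of $s$, and that $\frac1{2\beta}$ (the outer exponent in $\Phi_\beta$) matches up with $\frac1s\cdot\frac{2-s}{2}$ after the H\"older split. One should also note in passing that finiteness of $\Phi_\beta(\varphi_1,\varphi_2)$ together with finiteness of the $L^2$-norm of $h_1^{1/2}-h_2^{1/2}$ (which holds since each $h_k\in L^1(\mathbb D)$ with $\iint_{\mathbb D}h_k=|\Omega_k|<\infty$) guarantees $d_s(h_1,h_2)<\infty$, so the hypothesis of Theorem~\ref{thm:TwoWW} is met; this is what ties the lemma back into the chain of estimates.
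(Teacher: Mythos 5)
Your proposal is correct and follows essentially the same route as the paper: factor $|h_1-h_2|=|h_1^{1/2}-h_2^{1/2}|\,(h_1^{1/2}+h_2^{1/2})$, bound the sum by $2\max\{h_1,h_2\}^{1/2}$, and apply H\"older with exponents $\bigl(\tfrac{2}{s},\tfrac{2}{2-s}\bigr)$, the choice $s=\tfrac{2\beta}{\beta+1}$ making the second factor's norm equal to $\Phi_\beta(\varphi_1,\varphi_2)$ via $\tfrac{2-s}{2s}=\tfrac{1}{2\beta}$. Your exponent bookkeeping matches the paper's (and is in fact written more carefully than the paper's intermediate display, which misprints an exponent $\tfrac{s}{2}$ as $\tfrac12$), and your closing remark on finiteness of $d_s(h_1,h_2)$ is a harmless addition.
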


\begin{proof}
By the definitions of $h_1$, $h_2$ and $d_s(h_1,h_2)$
\begin{multline*}
[d_s(h_1,h_2)]^s=\iint\limits_{\mathbb{D}}|h_{1}(z)-h_{2}(z)|^s (\min\{h_1(z),h_2(z)\})^{1-s}~dxdy \\
= \iint\limits_{\mathbb{D}}||J(z,\varphi_1^{-1})|-|J(z,\varphi_2^{-1})||^s (\min\{|J(z,\varphi_1^{-1})|,|J(z,\varphi_2^{-1})|\})^{1-s}~dxdy \\
= \iint\limits_{\mathbb{D}}\left||J(z,\varphi_1^{-1})|^{\frac{1}{2}}+|J(z,\varphi_2^{-1})|^{\frac{1}{2}}\right|^s (\min\{|J(z,\varphi_1^{-1})|,|J(z,\varphi_2^{-1})|\})^{1-s} \\
\times \left||J(z,\varphi_1^{-1})|^{\frac{1}{2}}-|J(z,\varphi_2^{-1})|^{\frac{1}{2}}\right|^s~dxdy.
\end{multline*}
Applying to the last integral the H\"older inequality with $q=\frac{2}{s}$ ($1\leq q<2$ because $1<s\leq 2$) and $q'=\frac{2}{2-s}$ we obtain
\begin{align*}
[d_s(h_1,h_2)]^s
&=\Bigg(\iint\limits_{\mathbb{D}}\left||J(z,\varphi_1^{-1})|^{\frac{1}{2}}+|J(z,\varphi_2^{-1})|^{\frac{1}{2}}\right|^{\frac{2s}{2-s}} \\
&\times (\min\{|J(z,\varphi_1^{-1})|,|J(z,\varphi_2^{-1})|\})^{\frac{2(1-s)}{2-s}}dxdy\Bigg)^{\frac{2-s}{2}} \\
&\times 
\left(\iint\limits_{\mathbb{D}}\left(|J(z,\varphi_1^{-1})|^{\frac{1}{2}}-|J(z,\varphi_2^{-1})|^{\frac{1}{2}}\right)^2dxdy\right)^{\frac{1}{2}} \\
&\leq 2^s \left(
\iint\limits_{\mathbb{D}} \frac{\max \left\{|J(z,\varphi_1^{-1})|^{\frac{1}{2}},|J(z,\varphi_2^{-1})|^{\frac{1}{2}}\right\}^{\frac{2s}{2-s}}}{\min \left\{|J(z,\varphi_1^{-1})|^{\frac{1}{2}},|J(z,\varphi_2^{-1})|^{\frac{1}{2}}\right\}^{\frac{4(s-1)}{2-s}}} dxdy \right)^{\frac{2-s}{2}} \\
&\times 
\left(\iint\limits_{\mathbb{D}}\left(|J(z,\varphi_1^{-1})|^{\frac{1}{2}}-|J(z,\varphi_2^{-1})|^{\frac{1}{2}}\right)^2dxdy\right)^{\frac{1}{2}}.
\end{align*}
Since $s=\frac{2\beta}{\beta +1}$ we have
\begin{align*}
d_s(h_1,h_2)  
&\leq 2 \left(
\iint\limits_{\mathbb{D}} \frac{\max \left\{|J(z,\varphi_1^{-1})|^{\frac{1}{2}},|J(z,\varphi_2^{-1})|^{\frac{1}{2}}\right\}^{2 \beta}}{\min \left\{|J(z,\varphi_1^{-1})|^{\frac{1}{2}},|J(z,\varphi_2^{-1})|^{\frac{1}{2}}\right\}^{2(\beta -1)}} dxdy \right)^{\frac{1}{2 \beta}} \\
&\times 
\left(\iint\limits_{\mathbb{D}}\left(|J(z,\varphi_1^{-1})|^{\frac{1}{2}}-|J(z,\varphi_2^{-1})|^{\frac{1}{2}}\right)^2dxdy\right)^{\frac{1}{2}} \\
&= 2 \left(\iint \limits_{\mathbb D}\max \left\{\frac{|J(z,\varphi_1^{-1}|^{\beta}}{|J(z,\varphi_2^{-1}|^{\beta -1}}, \frac{|J(z,\varphi_2^{-1}|^{\beta}}{|J(z,\varphi_1^{-1}|^{\beta -1}}\right\}
\right)^{\frac{1}{2\beta}} \\
&\times 
\left(\iint\limits_{\mathbb{D}}\left(|J(z,\varphi_1^{-1})|^{\frac{1}{2}}-|J(z,\varphi_2^{-1})|^{\frac{1}{2}}\right)^2dxdy\right)^{\frac{1}{2}}.
\end{align*}
\end{proof}

Let us mention some results for quasiconformal mappings (see, for example, \cite{AIM}). Let $\Omega_1,\,\Omega_2$ and $\Omega_3$ be bounded simply connected domains on the complex plane $\mathbb C$.

If mapping $\psi:\Omega_1 \to \Omega_2$ is quasiconformal, then
$|J(z, \psi)|=|J(w, \psi^{-1})|^{-1}$ for almost all $z\in \Omega_1$ and for almost all $w=\psi(z)\in \Omega_2$.

If mappings $\psi_1:\Omega_1 \to \Omega_2$ and $\psi_2:\Omega_2 \to \Omega_3$ are quasiconformal, then
\[
J(z,\psi)=J(z,\psi_1) \cdot J(\psi_1(z),\psi_2),\,\, \text{a.e. in}\,\, \Omega_1\,\, (\psi=\psi_2 \circ \psi_1(z)).
\]

If two quasiconformal mappings $\varphi:\Omega_1 \to \Omega_2$ and $\psi:\Omega_1 \to \Omega_3$, defined on $\Omega_1$, have the same Beltrami coefficient, then the mapping $\psi \circ \varphi^{-1}:\Omega_1 \to \Omega_2$ is conformal.

Given these results we prove the following assertion.

\begin{proposition}
Two $A$-quasiconformal $\beta$-regular domains $\Omega_1=\varphi_1^{-1}(\mathbb D)$ and $\Omega_2=\varphi_2^{-1}(\mathbb D)$ represent an $A$-conformal $\beta$-regular pair if and only if 
\[
\Phi_{\beta}(\varphi_1,\varphi_2)=\left(\iint \limits_{\mathbb D}\max \left\{\frac{|J(z,\varphi_1^{-1}|^{\beta}}{|J(z,\varphi_2^{-1}|^{\beta -1}}, \frac{|J(z,\varphi_2^{-1}|^{\beta}}{|J(z,\varphi_1^{-1}|^{\beta -1}}\right\}
\right)^{\frac{1}{2\beta}}<\infty.
\]
\end{proposition}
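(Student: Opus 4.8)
The plan is to produce the conformal map called for in the definition explicitly as $\psi:=\varphi_2^{-1}\circ\varphi_1\colon\Omega_1\to\Omega_2$, and then to convert the two $\beta$-integrability conditions that define an $A$-conformal $\beta$-regular pair into integrals over $\mathbb D$ whose joint finiteness is precisely $\Phi_\beta(\varphi_1,\varphi_2)<\infty$.

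The first step is to check that $\psi$ is conformal. The inverse maps $\varphi_1^{-1},\varphi_2^{-1}\colon\mathbb D\to\Omega_k$ are $A^{-1}$-quasiconformal, hence agree as to complex dilatation on $\mathbb D$; solving one and the same Beltrami equation, $\psi=\varphi_2^{-1}\circ\bigl(\varphi_1^{-1}\bigr)^{-1}$ is conformal by the quoted property that two quasiconformal mappings with a common Beltrami coefficient differ by a conformal one.

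The second, computational, step uses only the chain rule $J(z,\psi_2\circ\psi_1)=J(z,\psi_1)\,J(\psi_1(z),\psi_2)$, the relation $J(z,\varphi)\,J(\varphi(z),\varphi^{-1})=1$, and the change-of-variables formula for quasiconformal mappings, all recalled in the excerpt. Writing $z=\varphi_1(w)$ for $w\in\Omega_1$ one gets $J(w,\psi)=J(z,\varphi_2^{-1})/J(z,\varphi_1^{-1})$, and since $dudv=|J(z,\varphi_1^{-1})|\,dxdy$,
\[
\iint\limits_{\Omega_1}|J(w,\psi)|^{\beta}\,dudv
=\iint\limits_{\mathbb D}\frac{|J(z,\varphi_2^{-1})|^{\beta}}{|J(z,\varphi_1^{-1})|^{\beta-1}}\,dxdy ,
\]
while the same computation applied to $\psi^{-1}=\varphi_1^{-1}\circ\varphi_2$ gives
\[
\iint\limits_{\Omega_2}|J(w,\psi^{-1})|^{\beta}\,dudv
=\iint\limits_{\mathbb D}\frac{|J(z,\varphi_1^{-1})|^{\beta}}{|J(z,\varphi_2^{-1})|^{\beta-1}}\,dxdy .
\]

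To finish, since $\max\{a,b\}\le a+b\le 2\max\{a,b\}$ for $a,b\ge 0$, the two left-hand sides are finite simultaneously if and only if $\iint_{\mathbb D}\max\{\,\ldots\,\}\,dxdy<\infty$, i.e.\ if and only if $\Phi_\beta(\varphi_1,\varphi_2)<\infty$; so $\Phi_\beta(\varphi_1,\varphi_2)<\infty$ makes $\psi$ a conformal map exhibiting $\Omega_1,\Omega_2$ as an $A$-conformal $\beta$-regular pair. For the reverse implication, any conformal witness $\tilde\psi\colon\Omega_1\to\Omega_2$ satisfies $\tilde\psi=\psi\circ\sigma$ with $\sigma$ a conformal automorphism of $\Omega_1$, and one then needs to pass the $\beta$-integrability back from $\tilde\psi$ to $\psi$. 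The Jacobian bookkeeping is routine; the two steps that require genuine care are the identification of the complex dilatations of $\varphi_1^{-1}$ and $\varphi_2^{-1}$ that makes $\psi$ conformal, and, in the reverse implication, the verification that finiteness of the two disc-integrals does not depend on which conformal representative $\Omega_1\to\Omega_2$ is used, i.e.\ that composing with a conformal automorphism of $\Omega_1$ preserves these integrability properties. I expect this last point to be the main obstacle.
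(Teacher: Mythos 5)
Your computational core is exactly the paper's proof. The paper also takes the explicit map $\psi=\varphi_2^{-1}\circ\varphi_1$, applies the chain rule $J(w,\psi)=J(w,\varphi_1)\cdot J(\varphi_1(w),\varphi_2^{-1})$ together with $|J(\varphi_1^{-1}(z),\varphi_1)|=|J(z,\varphi_1^{-1})|^{-1}$, and changes variables to obtain
\[
\iint\limits_{\Omega_1}|J(w,\psi)|^{\beta}\,dudv=\iint\limits_{\mathbb D}|J(z,\varphi_2^{-1})|^{\beta}\,|J(z,\varphi_1^{-1})|^{1-\beta}\,dxdy,
\]
declaring the symmetric identity for $\psi^{-1}$ ``similar.'' That computation is the entirety of the paper's argument; even the trivial $\max\{a,b\}\le a+b\le 2\max\{a,b\}$ step is left implicit there.

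The two points you single out as requiring care are genuine, but be aware that neither is addressed in the paper. (i) The paper simply asserts that ``the conformal mapping $\psi:\Omega_1\to\Omega_2$ can be written as $\varphi_2^{-1}\circ\varphi_1$'' with no justification. Your proposed argument --- that $\varphi_1^{-1}$ and $\varphi_2^{-1}$ solve one and the same Beltrami equation on $\mathbb D$ because both are $A^{-1}$-quasiconformal --- does not hold up as stated: the complex dilatation of $\varphi_k^{-1}$ at a point of $\mathbb D$ is obtained by transporting $\mu_A$ from $\Omega_k$ through $\varphi_k$ itself, so it depends on the map $\varphi_k$ and on the restriction of $A$ to $\Omega_k$; for two different domains these transported dilatations need not coincide, and conformality of $\varphi_2^{-1}\circ\varphi_1$ is really an additional compatibility assumption on $A$ rather than a consequence of the hypotheses. (ii) The converse direction is indeed where the ``if and only if'' is incomplete: if the pair is witnessed by some conformal $\tilde\psi=\psi\circ\sigma$ with $\sigma$ a conformal automorphism of $\Omega_1$, then a change of variables gives $\iint_{\Omega_1}|J(w,\tilde\psi)|^{\beta}dudv=\iint_{\Omega_1}|J(w,\sigma^{-1})|^{1-\beta}|J(w,\psi)|^{\beta}dudv$, and since $|J(w,\sigma^{-1})|$ is in general unbounded near $\partial\Omega_1$, finiteness for $\tilde\psi$ does not obviously transfer to $\psi$. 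The paper proves only the implication $\Phi_{\beta}(\varphi_1,\varphi_2)<\infty\Rightarrow$ the domains form a pair, and silently treats the computation as an equivalence. In short: your proof is the paper's proof, and the obstacles you correctly identify are gaps in the published argument as well, not steps you failed to reproduce.
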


\begin{proof}
The conditions $\Phi_{\beta}(\varphi_1,\varphi_2)<\infty$ means that domains $\Omega_1=\varphi_1^{-1}(\mathbb D)$, $\Omega_2=\varphi_2^{-1}(\mathbb D)$ represent a $A$-conformal $\beta$-regular pair. Indeed, conformal mapping $\psi:\Omega_1 \to \Omega_2$ can be written as a composition
\[
\psi(w)=\varphi_2^{-1}\left(\varphi_1(w)\right).
\]
Then
\[
J(w,\psi)=J(w,\varphi_1(w))\cdot J(\varphi_1(w), \varphi_2^{-1}) ,\,\, \text{a.e. in}\,\, \Omega_1.
\]
Using the change of variable formula we obtain
\begin{align*}
\iint\limits_{\Omega_1} |J(w,\psi)|^{\beta}~dudv &=\iint\limits_{\mathbb D} |J(\varphi_1^{-1}(z),\psi)|^{\beta} |J(z,\varphi_1^{-1})|~dxdy \\
&=\iint\limits_{\mathbb{D}} |J(\varphi_1^{-1}(z),\varphi_1)|^{\beta}|J(z,\varphi_2^{-1}|^{\beta}|J(z,\varphi_1^{-1})|~dxdy \\
&=\iint\limits_{\mathbb{D}} |J(z,\varphi_1^{-1})|^{-\beta}|J(z,\varphi_2^{-1}|^{\beta}|J(z,\varphi_1^{-1})|~dxdy \\
&= \iint\limits_{\mathbb{D}} |J(z,\varphi_2^{-1})|^{\beta} |J(z,\varphi_1^{-1})|^{1-\beta}~dxdy.
\end{align*} 
The similar calculation is correct for the inverse mapping $\psi^{-1}:\Omega_2 \to \Omega_1$.
\end{proof}

Now we are ready to prove the main result of this work.
\begin{theorem}\label{Main}
Let $A$ be a matrix satisfies the uniform ellipticity condition \eqref{UEC} and $\Omega_1=\varphi_1^{-1}(\mathbb D)$, $\Omega_2=\varphi_2^{-1}(\mathbb D)$ be an $A$-conformal $\beta$-regular pair.
 
Then for any $n\in \mathbb N$
\[
|\mu_n[A, \Omega_1]-\mu_n[A, \Omega_2]| 
\leq 2c_n B^2_{\frac{4\beta}{\beta -1},2}(\mathbb D,h) \Phi_{\beta}(\varphi_1,\varphi_2) \cdot
\|J_{\varphi_1^{-1}}^{\frac{1}{2}}-J_{\varphi_2^{-1}}^{\frac{1}{2}}\,|\,L^{2}(\mathbb D)\|,
\]
where
$c_n=\max\left\{\mu_n^2[A, \Omega_1], \mu_n^2[A, \Omega_2]\right\}$ and
$J_{\varphi_1^{-1}}$, $J_{\varphi_2^{-1}}$ are Jacobians inverse mappings to $A$-quasiconformal homeomorphisms $\varphi_1:\Omega_1 \to \mathbb D$ and $\varphi_2:\Omega_2 \to \mathbb D$ respectively. 
\end{theorem}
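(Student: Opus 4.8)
The plan is to assemble the three ingredients established above: the identity \eqref{EqEigenvalues} which matches the Neumann eigenvalues of $L_A$ on $\Omega_k$ with weighted eigenvalues in $\mathbb D$, the eigenvalue stability estimate of Theorem~\ref{thm:TwoWW}, and the bound on $d_s(h_1,h_2)$ of Lemma~\ref{Cor-5.1}. Write $h_k=|J(z,\varphi_k^{-1})|$, $k=1,2$, for the quasihyperbolic weights associated with the $A^{-1}$-quasiconformal mappings $\varphi_k^{-1}:\mathbb D\to\Omega_k$. By \eqref{EqEigenvalues} one has $\mu_n[A,\Omega_k]=\mu_n[h_k,\mathbb D]$ for every $n$, so it suffices to estimate $|\mu_n[h_1,\mathbb D]-\mu_n[h_2,\mathbb D]|$, and moreover $c_n=\max\{\mu_n^2[A,\Omega_1],\mu_n^2[A,\Omega_2]\}=\max\{\mu_n^2[h_1,\mathbb D],\mu_n^2[h_2,\mathbb D]\}=\tilde c_n$ in the notation of \eqref{tilde c_n}.

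Next I would fix $s=\frac{2\beta}{\beta+1}$, so that $1<s\le 2$ since $\beta>1$. Because $\Omega_1$ and $\Omega_2$ form an $A$-conformal $\beta$-regular pair, the Proposition above gives $\Phi_{\beta}(\varphi_1,\varphi_2)<\infty$. Since $\Omega_1$ and $\Omega_2$ are bounded,
\[
\iint\limits_{\mathbb D}\left(|J(z,\varphi_1^{-1})|^{\frac12}-|J(z,\varphi_2^{-1})|^{\frac12}\right)^2 dxdy
\le 2\iint\limits_{\mathbb D}|J(z,\varphi_1^{-1})|\,dxdy+2\iint\limits_{\mathbb D}|J(z,\varphi_2^{-1})|\,dxdy=2|\Omega_1|+2|\Omega_2|<\infty,
\]
so Lemma~\ref{Cor-5.1} applies: it yields both $d_s(h_1,h_2)<\infty$ and the bound
\[
d_{s}(h_{1},h_{2}) \leq 2\,\Phi_{\beta}(\varphi_1,\varphi_2)\cdot\|J_{\varphi_1^{-1}}^{\frac{1}{2}}-J_{\varphi_2^{-1}}^{\frac{1}{2}}\,|\,L^{2}(\mathbb D)\|.
\]

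Then I would invoke Theorem~\ref{thm:TwoWW} with this value of $s$: its hypothesis $d_s(h_1,h_2)<\infty$ has just been checked, and the short arithmetic computation $s-1=\frac{\beta-1}{\beta+1}$ gives $\frac{2s}{s-1}=\frac{4\beta}{\beta-1}$, which is exactly the exponent appearing in the weighted Poincar\'e--Sobolev constant. Thus
\[
|\mu_n[h_1,\mathbb D]-\mu_n[h_2,\mathbb D]|\le \tilde c_n\, B^2_{\frac{4\beta}{\beta-1},2}(\mathbb D,h)\, d_s(h_1,h_2),
\]
and substituting the bound on $d_s(h_1,h_2)$ from Lemma~\ref{Cor-5.1} together with $\tilde c_n=c_n$ and $\mu_n[h_k,\mathbb D]=\mu_n[A,\Omega_k]$ produces precisely the asserted estimate.

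The argument is essentially bookkeeping; the substantive content sits in Theorem~\ref{thm:TwoWW} and Lemma~\ref{Cor-5.1}. The only points that deserve care are the verification of the finiteness hypothesis $d_s(h_1,h_2)<\infty$ — which relies on the $A$-conformal $\beta$-regularity of the pair (through the Proposition) and on the boundedness of $\Omega_1,\Omega_2$ — and the identity $\frac{2s}{s-1}=\frac{4\beta}{\beta-1}$ that aligns the chosen exponent $s=\frac{2\beta}{\beta+1}$ with the constant $B_{\frac{4\beta}{\beta-1},2}(\mathbb D,h)$ in the final bound.
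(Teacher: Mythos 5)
Your proposal is correct and follows essentially the same route as the paper's own proof: combine the identification $\mu_n[A,\Omega_k]=\mu_n[h_k,\mathbb D]$ from \eqref{EqEigenvalues} with Theorem~\ref{thm:TwoWW} for $s=\frac{2\beta}{\beta+1}$ and the bound on $d_s(h_1,h_2)$ from Lemma~\ref{Cor-5.1}, using $\frac{2s}{s-1}=\frac{4\beta}{\beta-1}$. Your explicit verification of the finiteness hypothesis $d_s(h_1,h_2)<\infty$ (via the Proposition and the boundedness of $\Omega_1,\Omega_2$) is a welcome extra detail that the paper leaves implicit, but it does not change the argument.
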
 

\begin{proof}
According to Theorem \ref{thm:TwoWW} we have
\[
|\mu_{n}[h_{1},\mathbb D]-\mu_{n}[h_{2}, \mathbb D]|\leq \tilde c_{n}B_{\frac{2s}{s-1},2}^2(\mathbb D,h) d_{s}(h_{1},h_{2})\,.
\]
By Lemma \ref{Cor-5.1} we obtain the estimate of quality $d_{s}(h_{1},h_{2})$. For $s=\frac{2 \beta}{\beta +1}$ we have
\[
d_{s}(h_{1},h_{2}) \leq 2 \Phi_{\beta}(\varphi_1,\varphi_2) \cdot
\|J_{\varphi_1^{-1}}^{\frac{1}{2}}-J_{\varphi_2^{-1}}^{\frac{1}{2}}\,|\,L^{2}(\mathbb D)\|.
\]
Finally, using equality \eqref{EqEigenvalues} and given that $\frac{2s}{s-1}=\frac{4\beta}{\beta -1}$ for $s=\frac{2 \beta}{\beta +1}$ we get the required inequality 
\[
|\mu_n[A, \Omega_1]-\mu_n[A, \Omega_2]| 
\leq 2c_n B^2_{\frac{4\beta}{\beta -1},2}(\mathbb D,h) \Phi_{\beta}(\varphi_1,\varphi_2) \cdot
\|J_{\varphi_1^{-1}}^{\frac{1}{2}}-J_{\varphi_2^{-1}}^{\frac{1}{2}}\,|\,L^{2}(\mathbb D)\|,
\]
\end{proof}

\begin{remark}
According to the work \cite{GPU19} the constant $B_{\frac{4\beta}{\beta -1},2}(\mathbb D,h)$ can be estimated as
\[
B_{\frac{4\beta}{\beta -1},2}(\mathbb D,h) \leq K^{\frac{1}{2}} B_{q,2}(\mathbb D) \max \left\{\|J_{\varphi_1^{-1}}\,|\,L^{2}(\mathbb D)\|^{\frac{\beta -1}{4 \beta}},
\|J_{\varphi_2^{-1}}\,|\,L^{2}(\mathbb D)\|^{\frac{\beta -1}{4 \beta}}\right\}
\] 
where $K$ is a quasiconformal coefficient of $\varphi_1,\, \varphi_2$ and 
\[
B_{q,2}(\mathbb D) \leq \left(2^{-1} \pi\right)^{\frac{2-q}{2q}}\left(q+2\right)^{\frac{q+2}{2q}},\,\, q=\left(\frac{2 \beta}{\beta -1}\right)^2.
\] 
\end{remark}

\section{\textbf{ON ISOSPECTRAL OPERATORS}}

Let us remind that two linear operators are called isospectral if they have the same spectrum. It is known that there are distinct domains such that all the eigenvalues of the linear operator (in the case of the Laplace operator, see, for instance, \cite{BCDS}) coincide. For this reason, these are called isospectral domains. 

Let $\varphi:\Omega \to \mathbb D$ be $A$-quasiconformal mappings for which $|J(w,\varphi)|=1$ for almost all $w \in \Omega$. In this case quasihyperbolic weights $h(z)=|J(z, \varphi^{-1})|=1$ for almost all $z \in \mathbb D$. Hence, by formula~\eqref{MinMax} we have  
$\mu_n[A, \Omega]= \mu_n[\mathbb D]$
So, we conclude that if $|J(w,\varphi)|=1$ for almost all $w \in \Omega$, then the Neumann eigenvalues for the elliptic operator in divergence form $-\textrm{div} [A(w) \nabla g(w)]$ in a domain $\Omega$ are equal to the Neumann eigenvalues for the Laplace operator in the unit disc $\mathbb D$.

\vskip 0.1cm

\textbf{Acknowledgements.} The first author was supported by the United States-Israel Binational Science Foundation (BSF Grant No. 2014055). The second author was supported by 
RFBR Grant No. 18-31-00011.

\begin{flushright}
\textit{Ben-Gurion University of the Negev,\newline
Department of Mathematics,\newline
P.O. Box 653, 8410501 Beer Sheva,\newline
Israel,\newline
vladimir@math.bgu.ac.il  \newline
}
\end{flushright}

\begin{flushright}
\textit{Tomsk Polytechnic University,\newline
Division for Mathematics and Computer Sciences,\newline
Lenin Ave. 30, 634050 Tomsk,\newline
Russia;\newline
Tomsk State University,\newline
International Laboratory SSP \& QF,\newline
Lenin Ave. 36, 634050 Tomsk,\newline
Russia;\newline
vpchelintsev@vtomske.ru \newline 
}
\end{flushright}

\begin{flushright}
\textit{Ben-Gurion University of the Negev,\newline
Department of Mathematics,\newline
P.O. Box 653, 8410501 Beer Sheva,\newline
Israel,\newline
ukhlov@math.bgu.ac.il  \newline
}
\end{flushright}

\end{document}